\newcommand{\bbC}{\mathbb{C}}
\newcommand{\bbF}{\mathbb{F}}
\newcommand{\bbR}{\mathbb{R}}
\newcommand{\bbT}{\mathbb{T}}
\newcommand{\bbZ}{\mathbb{Z}}
\newcommand{\Fl}{{\operatorname{Fl}}}
\newcommand{\st}{\operatorname{st}}
\newcommand{\ovl}{\overline}
\DeclareMathOperator{\GrAff}{GrAff}
\newtheorem{thm}{Theorem}[section]
\newtheorem{cor}[thm]{Corollary}
\newtheorem{lem}[thm]{Lemma}
\newtheorem{prop}[thm]{Proposition}
\theoremstyle{definition}
\newtheorem{defn}[thm]{Definition}
\newtheorem{exmp}[thm]{Example}
\newtheorem{rem}[thm]{Remark}
\title{Algebraic and  o-minimal flows beyond the cocompact case}
\author{Spencer Dembner and Hunter Spink} 
\begin{document}
\begin{abstract}
Let $X \subset \mathbb{C}^n$ be an algebraic variety, and let $\Lambda \subset \mathbb{C}^n$ be a discrete subgroup whose real and complex spans agree. We describe the topological closure of the image of $X$ in $\mathbb{C}^n / \Lambda$, thereby extending a result of Peterzil-Starchenko in the case when $\Lambda$ is cocompact. 

We also obtain a similar extension when $X\subset \mathbb{R}^n$ is definable in an o-minimal structure with no restrictions on $\Lambda$, and as an application prove the following conjecture of Gallinaro: for a closed semi-algebraic $X\subset \mathbb{C}^n$ (such as a complex algebraic variety) and $\exp:\mathbb{C}^n\to (\mathbb{C}^*)^n$ the coordinate-wise exponential map, we have $\overline{\exp(X)}=\exp(X)\cup \bigcup_{i=1}^m \exp(C_i)\cdot \mathbb{T}_i$ where $\mathbb{T}_i\subset (\mathbb{C}^*)^n$ are positive-dimensional compact real tori and $C_i\subset \mathbb{C}^n$ are semi-algebraic.
\end{abstract}
\maketitle
\section{Introduction}

Let $V$ be a vector space over $\bbR$ or $\bbC$, $\Lambda \subset V$ a discrete subgroup, and $X \subset V$ a closed subset. Let $\pi \colon V \to V / \Lambda$ denote the projection map. Ullmo-Yafaev \cite{Ullmo} observed that as $R \to \infty$, the closures 
$$ \ovl{\pi(\{x \in X: \| x \| \ge R \})} $$
form what we may call a ``flow'' of sets, which
converge to a closed limiting set $\Fl(X)$ exactly containing those cosets $v+\Lambda$ which an unbounded sequence of points $x_i \in X$ can approach in Euclidean distance. Evidently we have
$$ \ovl{\pi(X)} = \pi(X) \cup \Fl(X).$$


In the case where $X \subset \bbC^n$ is a complex curve and $\Lambda \subset \bbC^n$ is cocompact, \cite[Theorem 2.4]{Ullmo} shows $\Fl(X)$ consists of finitely many translates of real subtori $\bbT \subset \bbC^n / \Lambda$. This was later extended to arbitrary discrete subgroups by Dinh-Vu \cite{DinhVu}. 


When $X$ is a higher-dimensional variety, an example of Peterzil-Starchenko \cite[Section 8]{PSTori} shows this description of $\Fl(X)$ doesn't hold, even if $\Lambda$ is cocompact. However, they show that in this cocompact case, replacing translated subtori by algebraic families of translated subtori is enough \cite[Theorem 1.1]{PSTori}. 

Our main result shows the cocompactness restriction can be removed when the real and complex spans of $\Lambda$ agree. For $\bbF=\bbR$ or $\bbC$ and $S$ a subset of an $\bbF$-vector space $V$, let $\bbF S$ denote its $\bbF$-linear span. 


\begin{thm}\label{thm:complex}
Let $X\subset \mathbb{C}^n$ be an algebraic variety, $L\subset \mathbb{C}^n$ a $\mathbb{C}$-subspace,  $\Lambda\subset \mathbb{C}^n$ a discrete subgroup with $\mathbb{R}\Lambda=\mathbb{C}\Lambda=L$, and $\pi:\mathbb{C}^n\to \mathbb{C}^n/\Lambda$ the quotient map. Then there are algebraic varieties $C_1,\ldots,C_m\subset \mathbb{C}^n$ with $\dim_{\mathbb{C}}C_i<\dim_{\mathbb{C}}X$ and positive-dimensional compact real tori $\mathbb{T}_1,\ldots,\mathbb{T}_m\subset L/\Lambda$ such that
$$\Fl(X)=\bigcup_{i=1}^m (\pi(C_i)+\mathbb{T}_i).$$
In particular, $\overline{\pi(X)}=\pi(X)\cup \bigcup_{i=1}^m (\pi(C_i)+\mathbb{T}_i)$.
\end{thm}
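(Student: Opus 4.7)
The plan is to exploit the hypothesis $\mathbb{R}\Lambda = \mathbb{C}\Lambda = L$ by choosing a $\mathbb{C}$-linear complement $L' \subset \mathbb{C}^n$ to $L$, which induces a biholomorphism $\mathbb{C}^n/\Lambda \cong L/\Lambda \times L'$ in which $L/\Lambda$ is a compact complex torus. Writing $p : \mathbb{C}^n \to L'$ for the projection along $L$, a point $(t,b) \in L/\Lambda \times L'$ lies in $\Fl(X)$ exactly when there is a sequence $(a_i, b_i) \in X \subset L \oplus L'$ with $a_i + \Lambda \to t$, $b_i \to b$, and $\|a_i\| \to \infty$; in particular $b$ must lie in $\overline{p(X)}$. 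Splitting this according to whether $b_i = b$ eventually, or $b_i \to b$ with $b_i \neq b$, decomposes the problem into a ``fiberwise'' part and a ``boundary'' part.

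For the fiberwise part, fix $b \in p(X)$ and consider sequences with $b_i = b$. The fiber $X_b := X \cap p^{-1}(b)$ is a complex algebraic subvariety of $L + b \cong L$, and since $\Lambda$ is cocompact in $L$, the Peterzil-Starchenko cocompact theorem applies to $X_b$ and yields $\Fl(X_b) = \bigcup_j (\pi_L(C_j^{(b)}) + \mathbb{T}_j^{(b)})$ with complex algebraic $C_j^{(b)} \subset L$ of strictly smaller dimension than $X_b$. I would then stratify $p(X)$ into locally closed algebraic pieces over which $p : X \to p(X)$ is uniform (e.g.\ flat with constant fiber Hilbert polynomial), so that the Peterzil-Starchenko data varies in an algebraic family. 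Because the closed subgroups of $L/\Lambda$ of each real dimension form countably many algebraic families (and only those appearing in the flow of some fiber matter), only finitely many relative tori arise, and the corresponding relative $C_j^{(b)}$ should assemble into global algebraic subvarieties $C_i \subset \mathbb{C}^n$ paired with subtori $\mathbb{T}_i \subset L/\Lambda$.

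The remaining contribution to $\Fl(X)$ comes from sequences with $b_i \to b$ but $b_i \neq b$, including the case $b \in \overline{p(X)} \setminus p(X)$, where the $L$-component escapes to infinity while the $L'$-component approaches a degeneration of the fibration. I would handle these by induction on $\dim_{\mathbb{C}} X$, passing to a suitable compactification of $\mathbb{C}^n$ in the $L$-direction and extracting the boundary-at-infinity locus (a strictly lower-dimensional algebraic variety mapping into $L'$), to which the inductive hypothesis applies after twisting by the appropriate degeneration of the torus. The main technical obstacle I expect is making the algebraic family structure in the second paragraph rigorous: ensuring that only finitely many distinct subtori $\mathbb{T}_j^{(b)}$ appear as $b$ ranges over $p(X)$, and that the relative $C_j^{(b)}$ fit into honest algebraic subvarieties of $\mathbb{C}^n$ rather than merely constructible sets. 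A secondary subtlety is preserving the strict dimension bound $\dim_{\mathbb{C}} C_i < \dim_{\mathbb{C}} X$ when combining the fiberwise and boundary contributions, which should follow because the fiberwise bound $\dim_{\mathbb{C}} C_j^{(b)} < \dim_{\mathbb{C}} X_b$ combines with $\dim_{\mathbb{C}} X_b \le \dim_{\mathbb{C}} X - \dim_{\mathbb{C}} p(X)$ on the generic stratum.
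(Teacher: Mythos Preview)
Your approach differs substantially from the paper's. Rather than reducing fiberwise to the cocompact case, the paper works directly with Peterzil--Starchenko's global constructible family of asymptotic flats $\mathcal{A}^{\mathbb{C}}(X)\subset \GrAff^{\mathbb{C}}(\mathbb{C}^n)$. The key observation is that if $\alpha\in X^\sharp$ (in a saturated extension) has $\alpha+\Lambda^\sharp$ meeting $\mathcal{O}_{\mathcal{C}}^n$, then the $L'$-component of $\alpha$ must be bounded, which forces the asymptotic flat $A_\alpha^{\mathbb{C}}$ to have linear part contained in $L$. Hence only the constructible subfamily $\mathcal{A}^{\mathbb{C}}_{pos}(X;L):=\{A\in\mathcal{A}^{\mathbb{C}}_{pos}(X):L(A)\subset L\}$ is relevant, and one proves $\Fl(X)=\bigcup_{A\in\mathcal{A}^{\mathbb{C}}_{pos}(X;L)}\overline{\pi(A)}$ by adjoining a complementary lattice $\Lambda'\subset L'$ and invoking the cocompact result for a single flat. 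The neat-family decomposition of $\mathcal{A}^{\mathbb{C}}_{pos}(X;L)$ then produces the $C_i$ and $\mathbb{T}_i$ exactly as in \cite{PSTori}, with no induction on $\dim X$ and no fiberwise/boundary dichotomy.

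Your plan has a genuine gap at the assembly step. The cocompact theorem, applied as a black box to each $X_b$, outputs a decomposition that is not canonical --- it depends on a choice of neat-family splitting --- so there is no a priori reason the pieces $C_j^{(b)},\mathbb{T}_j^{(b)}$ vary constructibly in $b$. Your claim that ``only finitely many relative tori arise'' is unsupported: there are countably many subtori of $L/\Lambda$, and nothing in the black-box statement prevents infinitely many of them from occurring across the fibers. To obtain this finiteness one needs the asymptotic flats of the fibers to sit in a single constructible family over $p(X)$, which is precisely what the paper's $\mathcal{A}^{\mathbb{C}}(X;L)$ provides; filling this in would amount to reproving that input rather than bypassing it. The boundary part is also underspecified: the ``boundary-at-infinity locus'' you describe lives in a projective compactification rather than in $\mathbb{C}^n$, so the inductive hypothesis on $\Fl$ does not apply to it, and ``twisting by the appropriate degeneration of the torus'' has no evident meaning here.
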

\begin{rem}
As in \cite{PSTori}, we will be able to take $C_1,\ldots,C_m$ to only depend on $X$ and $L$, and take positive-dimensional $\mathbb{C}$-subspaces $V_1,\ldots,V_m\subset L$ only depending on $X$ and $L$ such that $\mathbb{T}_i=\overline{\pi(V_i)}$.
\end{rem}
When $\Lambda$ is cocompact, \cite{PSTori} also shows that we may assume $C_i$ is a finite collection of points if $\mathbb{T}_i$ is inclusion-maximal among $\mathbb{T}_1,\ldots,\mathbb{T}_m$, but this is false in this more general setting even for $X=\mathbb{C}^2$ and $\Lambda=(\mathbb{Z}\oplus i\mathbb{Z})\times \{0\}$.

The condition that the $\mathbb{R}$-span and $\mathbb{C}$-span of $\Lambda$ are equal is essential, even without the dimension conditions, as the following example of \cite{DinhVu} shows.
\begin{exmp}[{\cite[{Example in Section 4}]{DinhVu}}]\label{exmp: unequal spans}
Let $X$ be the hypersurface $z=xye^{-i\pi/4}(y+1)$ in $\mathbb{C}^3$, and let $\Lambda=\mathbb{Z}^3$. Letting $p \colon \bbC \to \bbC / \bbZ$ be the projection and defining
$$L=\{s\in \mathbb{C}:\arg(s^2+s)=\pi/4\text{ or }5 \pi /4\}\cup \{s\in \mathbb{C}:s^2+s=0\},$$
we have
$$\Fl(X)=[(\mathbb{C}/\mathbb{Z})\times p(L) \times (\mathbb{C}/\mathbb{Z}) ]\cup[p(e^{i\pi/4}\mathbb{R})\times (\mathbb{C}/\mathbb{Z})\times (\mathbb{C}/\mathbb{Z})].$$
But one can show that any translate of a proper subset $C+V\subset \mathbb{C}^3$ with $C$ an algebraic variety and $V$ a real vector subspace cannot intersect  $\mathbb{C}\times L \times \mathbb{C}$
in a full-dimensional set. So if $\overline{\pi(X)}=\pi(X)\cup \bigcup_{i=1}^m(\pi(C_i)+\mathbb{T}_i)$, we obtain a contradiction by applying the Baire category theorem to the countable union  $\pi^{-1}(\overline{\pi(X)})=\bigcup_i C_i+\pi^{-1}(\mathbb{T}_i)$ of translates for $i=1,\ldots,m$ of proper subsets of $\mathbb{C}^3$ of the form $C_i+V_i$, where $V_i$ is the connected component of $\pi^{-1}(\mathbb{T}_i)$ at $0$, obtains a contradiction.
\end{exmp}



Gallinaro \cite[Question 6.3.2]{Gallinaro}, in work on Zilber's exponential-algebraic closedness conjecture (an equivalent formulation of Zilber's quasi-minimality conjecture \cite{ZilberQuasiminimal} as established by Bays-Kirby \cite[Theorem 1.5]{BK}), has conjectured that such a decomposition should be possible if the $C_i$ are taken semi-algebraic. Our second main result, an o-minimal version of \Cref{thm:complex} with no restrictions on $\Lambda$ generalizing \cite[Theorem 1.2]{PSTori}, confirms this conjecture. 

\begin{thm}\label{thm:real}
Fix an o-minimal structure $\mathbb{R}_{om}$ extending the field $\mathbb{R}$. Let $X\subset \mathbb{R}^n$ be a definable closed set, $L\subset \mathbb{R}^n$ an $\mathbb{R}$-subspace, and $\Lambda\subset \mathbb{R}^n$ a discrete subgroup with $\mathbb{R}\Lambda=L$. Then for  $\pi:\mathbb{R}^n\to \mathbb{R}^n/\Lambda$ the quotient map, the following is true. There are definable closed sets $C_1,\ldots,C_m\subset \mathbb{R}^n$ with $\dim C_i<\dim X$ and compact positive-dimensional real tori $\mathbb{T}_1,\ldots,\mathbb{T}_m\subset L/\Lambda$ such that
$$\Fl(X)=\bigcup_{i=1}^m (\pi(C_i)+\mathbb{T}_i).$$
In particular, $\overline{\pi(X)}=\pi(X)\cup \bigcup_{i=1}^m (\pi(C_i)+\mathbb{T}_i)$.
\end{thm}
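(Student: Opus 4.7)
The plan is to reduce to the cocompact case \cite[Theorem 1.2]{PSTori} by isolating the non-compact direction. Fix an $\mathbb{R}$-linear complement $L'$ of $L$ in $\mathbb{R}^n$, so that $\mathbb{R}^n = L \oplus L'$; since $\mathbb{R}\Lambda = L$ forces $\Lambda$ to be a full-rank lattice in $L$, we obtain a topological group isomorphism $\mathbb{R}^n/\Lambda \cong (L/\Lambda) \times L'$ in which $L/\Lambda$ is a compact real torus. Writing $x_i = y_i + z_i$ with $y_i \in L$ and $z_i \in L'$, any sequence in $X$ with $\|x_i\| \to \infty$ whose image $\pi(x_i)$ converges must have $z_i$ convergent, hence bounded, forcing $\|y_i\| \to \infty$. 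Thus $\Fl(X)$ projects under $(L/\Lambda)\times L'\to L'$ into $\overline{q(X)}$, where $q\colon \mathbb{R}^n \to L'$ is the projection, and all escape to infinity happens along the cocompact direction $L$.

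Next I would describe $\Fl(X)$ fiberwise over $v \in \overline{q(X)}$ using two complementary tools. For $v \in q(X)$, the slice $X_v := X \cap (L \times \{v\})$ is a definable subset of $L$ with $\Lambda \subset L$ cocompact, so \cite[Theorem 1.2]{PSTori} applies to yield a torus decomposition of the flow of $X_v$ inside $L/\Lambda$; inspecting that proof should show the construction depends definably on a family parameter, so feeding in the definable family $\{X_v\}_{v\in L'}$ produces uniformly definable data in $v$. To capture the remaining flow points---those approached by sequences with $z_i \to v$ but $z_i \ne v$, in particular when $v \in \overline{q(X)} \setminus q(X)$---I would use o-minimal curve selection: each such point arises as a cluster value of $\pi(\gamma(t))$ along a definable curve $\gamma\colon (0,\epsilon) \to X$ with $\|\gamma(t)\| \to \infty$. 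Writing $\gamma = \gamma_L + \gamma_{L'}$, the component $\gamma_{L'}$ converges definably to some $v_\infty$, and the key lemma is that the cluster set of $\gamma_L \bmod \Lambda$ in $L/\Lambda$ is always a translate of a closed real subtorus, obtained from the (possibly nested) asymptotic expansion of $\gamma_L$ via a Kronecker--Weyl analysis that folds back into the cocompact theorem.

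Finally I assemble: o-minimal finiteness should ensure only finitely many subtori $\mathbb{T}_1, \ldots, \mathbb{T}_m$ of $L/\Lambda$ arise across all fiberwise and curve-based contributions, and for each $\mathbb{T}_i$ the set of base points $(w, v) \in (L/\Lambda) \times L'$ with $(w,v) + (\mathbb{T}_i \times \{0\}) \subseteq \Fl(X)$ is definable and lifts through $\pi$ to a definable $C_i \subset \mathbb{R}^n$ with $\pi(C_i) + \mathbb{T}_i$ the corresponding flow piece. The dimension bound $\dim C_i < \dim X$ follows because $\dim(\pi(C_i) + \mathbb{T}_i) \leq \dim X$ while $\dim \mathbb{T}_i \geq 1$. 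The principal obstacle is the curve-cluster lemma: showing that the cluster set of a definable curve in $L$ modulo a cocompact lattice is genuinely a torus translate. A definable curve may have several nested divergent asymptotic layers, each contributing its own direction in $L/\Lambda$, and verifying inductively that the iterated Kronecker--Weyl closure remains a single subtorus is where the cocompact Peterzil--Starchenko theorem enters as the essential base case.
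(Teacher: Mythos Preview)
Your opening observation---that convergence of $\pi(x_i)$ forces the $L'$-component $z_i$ to converge, so all divergence happens inside $L$---is exactly the geometric insight driving the paper's proof. But the paper does not reduce fiberwise to the cocompact theorem. Instead it works in a $\kappa$-saturated elementary extension $\mathcal{R}$ and uses the Peterzil--Starchenko machinery of \emph{asymptotic flats}: to each $\alpha\in X^\sharp$ one attaches the minimal $\mathbb{R}$-flat $A_\alpha$ with $\alpha\in A_\alpha^\sharp+\mu_{\mathcal{R}}^n$, and the key new step is to restrict to those $\alpha$ whose flat has linear part contained in $L$ (equivalently, $\alpha\in\mathcal{F}^k\times\mathcal{O}_{\mathcal{F}}^{n-k}$ after coordinates). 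This yields a definable family $\mathcal{A}^{\mathbb{R}}_{pos}(X;L)$ of flats with $\Fl(X)=\bigcup_A\overline{\pi(A)}$, and the reduction to cocompact Peterzil--Starchenko happens only once, inside a lemma identifying $\overline{A_\alpha+\Lambda}$ with $\st(p_\alpha(\mathcal{F})+\Lambda^\sharp)$ by artificially adjoining a complementary lattice $\Lambda'\subset L'$. The $C_i$ and $\mathbb{T}_i$ then come from decomposing $\mathcal{A}^{\mathbb{R}}_{pos}(X;L)$ into finitely many neat families $T_i$, setting $V_i=L(T_i)$, $\mathbb{T}_i=\overline{\pi(V_i)}$, and $C_i$ the closure of $\{(A+V_i)\cap V_i^\perp:A\in T_i\}$; the dimension bound is then inherited verbatim from \cite{PSTori}.

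Your route has two genuine gaps. First, o-minimal curve selection does not directly produce curves whose image \emph{modulo $\Lambda$} clusters at a prescribed $p\in\Fl(X)$: the condition ``$\pi(x)$ close to $p$'' is not definable in $\mathbb{R}_{om}$ because $\Lambda$ is not, so you cannot simply apply curve selection to a definable set encoding it. This is precisely why the paper passes to a saturated model and uses types rather than curves. Second, even over $v\in q(X)$ the fiber of $\Fl(X)$ over $v$ is strictly larger than $\Fl(X_v)$ in general (sequences with $z_i\to v$, $z_i\neq v$ contribute), so your two mechanisms---fiberwise cocompact theorem plus curve-cluster lemma---must be shown jointly exhaustive, and you have not done this. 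Finally, your dimension argument ``$\dim(\pi(C_i)+\mathbb{T}_i)\le\dim X$ and $\dim\mathbb{T}_i\ge 1$ imply $\dim C_i<\dim X$'' presumes $\dim(\pi(C_i)+\mathbb{T}_i)=\dim C_i+\dim\mathbb{T}_i$, which fails unless $C_i$ is chosen transverse to $\mathbb{T}_i$; the paper arranges this by placing $C_i\subset V_i^\perp$, and the bound still needs the nontrivial input from \cite[\S6.2, \S7.3.1]{PSTori}. The curve-cluster lemma you flag as the main obstacle is indeed the heart of the matter, and proving it amounts to re-deriving the asymptotic-flat description for one-dimensional $X$.
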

\begin{rem}
As in \cite{PSTori}, we will be able to take $C_1,\ldots,C_m$ to only depend on $X$ and $L$, and take positive-dimensional $\mathbb{R}$-subspaces $V_1,\ldots,V_m\subset L$ only depending on $X$ and $L$ such that $\mathbb{T}_i=\overline{\pi(V_i)}$.
\end{rem}

In the cocompact case \cite{PSTori} also shows that we may take $C_i$ to be compact if $\mathbb{T}_i$ is inclusion-maximal amongst $\mathbb{T}_1,\ldots,\mathbb{T}_m$, but this is false in this more general setting even for $X=\mathbb{R}^2$ and $\Lambda=\mathbb{Z}\times \{0\}$.
\begin{cor}[{\cite[Question 6.3.2]{Gallinaro}}]
Let $X\subset \mathbb{C}^n$ be a closed semi-algebraic set (such as a complex algebraic variety), and $\exp:\mathbb{C}^n\to (\mathbb{C}^*)^n$ the map $(z_1,\ldots,z_n)\mapsto (\exp(z_1),\ldots,\exp(z_n))$. Then there are semi-algebraic subsets $C_1,\ldots,C_m\subset \mathbb{C}^n$ with $\dim C_i<\dim X$ and compact positive-dimensional real tori $\mathbb{T}_1,\ldots,\mathbb{T}_m\subset (\mathbb{C}^*)^n$ such that 
$$\overline{\exp(X)}=\exp(X)\cup \bigcup_{i=1}^m \exp(C_i)\cdot \mathbb{T}_i.$$
\end{cor}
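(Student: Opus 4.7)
\textbf{The plan} is to deduce this corollary from \Cref{thm:real} by choosing the lattice to which the exponential map is adapted. Identify $\mathbb{C}^n \cong \mathbb{R}^{2n}$ via real and imaginary parts, take $\mathbb{R}_{om}$ to be the semi-algebraic o-minimal structure, and set $\Lambda := (2\pi i\mathbb{Z})^n \subset \mathbb{C}^n$, a discrete subgroup with $\mathbb{R}\Lambda = L := (i\mathbb{R})^n$. The coordinate-wise exponential then factors as $\exp = \overline{\exp}\circ \pi$, where $\pi : \mathbb{C}^n \to \mathbb{C}^n/\Lambda$ is the quotient and $\overline{\exp} : \mathbb{C}^n/\Lambda \xrightarrow{\sim} (\mathbb{C}^*)^n$ is an isomorphism of real Lie groups restricting to an isomorphism of the compact subtorus $L/\Lambda$ with the unit polytorus $(S^1)^n \subset (\mathbb{C}^*)^n$.

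\textbf{The main step} is to apply \Cref{thm:real} to $X$, $L$, and $\Lambda$, producing closed semi-algebraic $C_1,\ldots,C_m \subset \mathbb{C}^n$ with $\dim C_i < \dim X$ and positive-dimensional compact real tori $\mathbb{T}_i \subset L/\Lambda$ such that
$$\overline{\pi(X)} = \pi(X) \cup \bigcup_{i=1}^m \bigl(\pi(C_i) + \mathbb{T}_i\bigr).$$
Applying the homeomorphism $\overline{\exp}$ to both sides, and using that it intertwines addition with multiplication, this transports to
$$\overline{\exp(X)} = \exp(X) \cup \bigcup_{i=1}^m \exp(C_i)\cdot \overline{\exp}(\mathbb{T}_i),$$
with each $\overline{\exp}(\mathbb{T}_i) \subset (S^1)^n \subset (\mathbb{C}^*)^n$ a positive-dimensional compact real subtorus, which is precisely the conclusion.

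\textbf{There is no substantive obstacle.} The corollary is essentially a direct repackaging of \Cref{thm:real} once the exponential map is recognized as the quotient by $(2\pi i\mathbb{Z})^n$. The only checks required are that a closed semi-algebraic subset of $\mathbb{C}^n = \mathbb{R}^{2n}$ is a closed definable set in the semi-algebraic o-minimal structure (immediate), and that $\overline{\exp}$ is a real-analytic diffeomorphism (immediate from the polar decomposition $(\mathbb{C}^*)^n \cong \mathbb{R}_{>0}^n \times (S^1)^n$ which splits $\overline{\exp}$ into a real-coordinate projection and the standard parametrization of $(S^1)^n$).
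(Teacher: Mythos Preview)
Your proposal is correct and follows essentially the same approach as the paper: apply \Cref{thm:real} in the semi-algebraic structure with $\Lambda$ the kernel of $\exp$, and transport the resulting decomposition through the induced group isomorphism $\mathbb{C}^n/\Lambda \cong (\mathbb{C}^*)^n$. You are in fact slightly more careful than the paper, which writes $\Lambda=\mathbb{Z}^n$ rather than $(2\pi i\mathbb{Z})^n$ and leaves implicit the verification that the induced map is a homeomorphism (needed to commute with taking closures).
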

\begin{proof}
We apply the theorem with $\Lambda=\mathbb{Z}^n\subset \mathbb{C}^n$ in the semi-algebraic o-minimal structure $\mathbb{R}_{alg}$, since $\exp$ is surjective with  kernel $\mathbb{Z}^n$, so $\exp$ is identified with the quotient map $\mathbb{C}^n\to \mathbb{C}^n/\mathbb{Z}^n$.
\end{proof}
To prove the cocompact case, the main technical tool introduced in \cite{PSTori} is a definable collection of ``asymptotic flats'' $\mathcal{A}^{\mathbb{R}}(X)$ (or $\mathcal{A}^{\mathbb{C}}(X)$) associated to $X$ which encode the linear behaviour of sequences of points of $X$. Roughly speaking, limiting sequences of points are encoded by realizations of $X$ in a saturated extension $\mathcal{R}$ of $\mathbb{R}$ (or in $\mathcal{C}:=\mathcal{R}\oplus i\mathcal{R}$), and we associate to such a point the smallest flat defined over $\mathbb{R}$ (or $\mathbb{C}$) which is infinitesimally close to this realization.

The key idea in our proof is that if we decompose our ambient vector space as $L\oplus L^{\perp}$, then a sequence of points $x_1,x_2,\ldots$ limiting to a coset of $\Lambda$ has convergent $L^{\perp}$-component, and the corresponding asymptotic flat would then have linear part contained in $L\oplus \{0\}$. This motivates us to consider just the sub-family of flats of $\mathcal{A}(X)$ with no linear part in the $L^{\perp}$ direction, and by doing so we can carry out a very similar argument in both the complex algebraic and o-minimal settings.

\noindent\textbf{Acknowledgements.} We thank Y. Peterzil, S. Starchenko, and F. Gallinaro for helpful answers to questions. 

\section{Model theory setup}
We closely follow the setup of \cite{PSTori}, which we recall briefly. Fix a cardinal $\kappa > 2^{\omega}$. $\mathbb{R}$ will be considered as an ordered field over the language $\langle 0,1,+,\times,<\rangle$. Let $\mathbb{R}_{full}$ be the expansion of the field $\mathbb{R}$ by all subsets of $\mathbb{R}^n$ for all $n$, $\mathcal{L}_{full}$ the corresponding language, and $\mathcal{R}_{full}$ a $\kappa$-saturated extension. Denote by $\mathcal{R}$ the underlying field. Fix an o-minimal expansion $\mathbb{R}_{om}$ of $\mathbb{R}$ with corresponding language $\mathcal{L}_{om}$, and let $\mathcal{R}_{om}$ be the corresponding reduct $\mathcal{R}_{full}\upharpoonright \mathcal{L}_{om}$ of $\mathcal{R}_{full}$, which is also $\kappa$-saturated.

We denote $\mu_{\mathcal{R}}\subset \mathcal{R}$ for the set of infinitesimal elements and $\mathcal{O}_{\mathcal{R}}=\mathcal{R}\oplus \mu_{\mathcal{R}}$ the set of bounded elements (neither of which are definable in $\mathcal{R}_{om}$). We denote $\st:\mathcal{O}_{\mathcal{R}}\to \mathbb{R}$ the standard part map, and for a subset $S\subset \mathcal{R}^n$ we denote
$$\st(S):=\st(S\cap \mathcal{O}_{\mathcal{R}}^n).$$

The complex numbers $\mathbb{C}=\mathbb{R}\oplus i\mathbb{R}$ will be considered as a structure with signature $\langle 0,1,+,\times\rangle$. We denote by $\mathcal{C}=\mathcal{R}\oplus i\mathcal{R}$, the set of infinitesimals $\mu_{\mathcal{C}}=\mu_{\mathcal{R}}\oplus i\mu_{\mathcal{R}}$, and the set of bounded elements $\mathcal{O}_{\mathcal{C}}=\mathcal{O}_{\mathcal{R}}\oplus i\mathcal{O}_{\mathcal{R}}$. With these identifications, we may similarly define $\st:\mathcal{O}_{\mathcal{C}}\to \mathcal{C}$ and $\st(S)=\st(S\cap \mathcal{O}_{\mathcal{C}}^n)$ for a subset $S\subset \mathcal{C}^n$. Through the identification $\mathcal{C}=\mathcal{R}\oplus i\mathcal{R}$, we may consider $\mathcal{C}_{full}$ the expansion of $\mathbb{C}$ by all subsets of $\mathbb{C}^n$ for all $n$.

Denote $\mathcal{L}_{val}=\langle 0,1, +, \cdot, \mathcal{O}\rangle$ the language of a valued field, with $\mathcal{O}$ a unary predicate for the valuation ring, and $\mathcal{C}_{val}$ the $\mathcal{L}_{val}$-structure of the valued field $\mathcal{C}$ augmented with the set $\mathcal{O}_{\mathcal{C}}$ of bounded elements. Note also that $\mu_{\mathcal{C}}$ is definable in $\mathcal{C}_{val}$ by the formula $ x=0\wedge x^{-1}\not\in \mathcal{O}_{\mathcal{C}}$. Note that $\mathcal{C}_{val}$ is not a reduct of $\mathcal{C}_{full}$ and in particular not $\kappa$-saturated, since for example the finitely-satisfiable collection of $2^\omega$ formulas $x\in \mathcal{O}_{\mathcal{C}} \cup \bigcup_{c\in \mathbb{C}} \{x\not\in c+\mu_{\mathcal{C}}\}$ is not satisfiable.

We will denote $(\mathbb{F},\mathcal{F})$ to be either $(\mathbb{R},\mathcal{R})$ or $(\mathbb{C},\mathcal{C})$.

If $\mathcal{L}$ is one of the languages $\mathcal{L}_{om},\mathcal{L}_{full}$ when working in the real setting or $\mathcal{L}_{val},\mathcal{L}_{full}$ when working in the complex setting, then an $\mathcal{L}$-definable partial type $p(x_1,\ldots,x_n)$ is a finitely satisfiable collection of formulas over the language $\mathcal{L}$ with free variables $x_1,\ldots,x_n$ and constants in $A$. We denote by $p(\mathcal{F})\subset \mathcal{F}^n$ the set of realizations of $p$ over $\mathcal{F}$.

Suppose we are in one of the $\kappa$-saturated structures $\mathcal{R}_{om},\mathcal{R}_{full}$, or $\mathcal{C}_{full}$, and consider types $p$ defined over a set of constants $A$ with $|A|<\kappa$. Two such types $p,q$ are considered equivalent if for every finite $p_0\subset p$ there exists a finite $q_0\subset q$ such that $q_0(\mathcal{F})\subset p_0(\mathcal{F})$ and vice versa. By $\kappa$-saturation, two such types $p,q$ are equivalent if and only if $p(\mathcal{F})=q(\mathcal{F})$. If $p,q$ are any two types, then the sum $p+q$ is defined by
$$p+q=\{\exists y,z\in \mathcal{F}\text{ with } x=y+z\text{ and }f(y)\text{ and } g(z)\text{ both true}\}_{f\in p,g\in q}.$$
By $\kappa$-saturation, for two such types we have $(p+q)(\mathcal{F})=p(\mathcal{F})+q(\mathcal{F})$, and consequently for three such types $p,q,r$, the types $(p+q)+r$ and $p+(q+r)$ are equivalent.

For $S\subset \mathcal{F}^n$, we can consider $S$ as an $\mathcal{F}_{full}$-definable subset of $\mathcal{F}^n$. By abuse of notation we also use $S$ to denote the partial type of all $\mathcal{F}_{full}$-formulas with constants in $\mathbb{F}$ satisfied by $S$, and we write $S^{\sharp}:=S(\mathcal{F})$ for the set of realizations of $S$ over $\mathcal{F}$. Also by abuse of notation, we identify $\mu_{\mathcal{R}}$ with the partial type $\{-r<x<r\}_{r\in \mathbb{R}_{>0}}$ in $\mathcal{R}_{om}$ or $\mathcal{R}_{full}$ and $\mu_{\mathcal{C}}$ with the partial type $\{-r<Re(x),Im(x)<r\}_{r\in \mathbb{R}_{>0}}$ in $\mathcal{C}_{full}$, so that  $\mu_{\mathcal{R}}(\mathcal{R})=\mu_{\mathcal{R}}$ and $\mu_{\mathcal{C}}(\mathcal{C})=\mu_{\mathcal{C}}$. These abuses of notation are consistent with the sum notation for types, as by $\kappa$-saturatedness for $S,T\subset \mathcal{F}^n$ we have
\begin{align*}(S+T)^{\sharp}=S^{\sharp}+T^{\sharp},\text{ and }
(S+\mu_{\mathcal{C}}^n)(\mathcal{F})=S^{\sharp}+\mu_{\mathcal{C}}^n.
\end{align*}
Finally, by $\kappa$-saturatedness, for any $S\subset \mathbb{F}^n$ we have
$$\st(S^{\sharp})=\overline{S},$$
where $\overline{S}\subset \mathbb{F}^n$ is the topological closure.
\section{Flows via non-standard realizations}
Note that
$$\pi^{-1}\Fl(X)=\bigcap_{R\in \mathbb{R}_{>0}}\overline{(X-B(0,R))+\Lambda}.$$
We will give an alternate description of $\pi^{-1}\Fl(X)$ via non-standard realizations.
\begin{defn}
Let $X\subset \mathbb{F}^n$ be an $\bbR_{om}$-definable set for $\bbF = \bbR$, or a constructible set for $\bbF = \bbC$. We define the partial type
$$X_{\infty}^{\mathbb{F}}=\begin{cases}\{x\in X\text{ and }\|x\|\ge R\}_{R\in \mathbb{R}_{>0}}& \mathbb{F}=\mathbb{R}\\
\{x\in X\text{ and }x\not\in \mathcal{O}_{\mathcal{C}}^n\}&\mathbb{F}=\mathbb{C}.\end{cases}$$
This is either an $\mathcal{R}_{om}-$partial type with constants in $\mathbb{R}$ or a $\mathcal{C}_{val}$-partial type with constants in $\mathbb{C}$.
\end{defn}
\begin{thm}
For $X\subset \mathbb{F}^n$ as above, we have $X_{\infty}^{\mathbb{F}}(\mathcal{F})=X^{\sharp}\setminus \mathcal{O}_{\mathcal{F}}^n$, and
$$\pi^{-1}\Fl(X)=\st(X_{\infty}^{\mathbb{F}}(\mathcal{F})+\Lambda^{\sharp}).$$
\end{thm}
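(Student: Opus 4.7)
The first equality is a direct unpacking of definitions: in the complex case ``$x\notin\mathcal{O}_{\mathcal{C}}^n$'' is already a single $\mathcal{L}_{val}$-formula whose realizations in $\mathcal{C}$ form the complement of $\mathcal{O}_{\mathcal{C}}^n$, and in the real case the partial type $\{\|x\|\ge R\}_{R>0}$ is realized in $\mathcal{R}^n$ exactly by those points whose norm exceeds every positive real, i.e.\ by $\mathcal{R}^n\setminus\mathcal{O}_{\mathcal{R}}^n$. Conjoining with $x\in X$ gives $X_{\infty}^{\mathbb{F}}(\mathcal{F})=X^\sharp\setminus\mathcal{O}_{\mathcal{F}}^n$ in both cases.

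For the second equality I would prove both inclusions using $\kappa$-saturation and transfer, carrying out the argument inside $\mathcal{F}_{full}$ so that $X$, $\Lambda$, and $\mathcal{O}_{\mathcal{F}}^n$ are simultaneously $\mathcal{L}_{full}$-definable and $\kappa$-saturation is available. In the complex case one first observes that $X_{\infty}^{\mathbb{C}}(\mathcal{C})$ agrees with its reinterpretation in $\mathcal{C}_{full}$, since both $X$ (constructible) and $\mathcal{O}_{\mathcal{C}}^n$ are $\mathcal{L}_{full}$-definable with the same interpretation on the underlying set $\mathcal{C}$.

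The reverse inclusion is by elementarity: given $y=\st(\xi+\lambda)$ with $\xi\in X_{\infty}^{\mathbb{F}}(\mathcal{F})$, $\lambda\in\Lambda^\sharp$, and $\xi+\lambda-y\in\mu_{\mathcal{F}}^n$, the pair $(\xi,\lambda)$ witnesses the $\mathcal{L}_{full}$-sentence
\[ \exists\xi'\in X\,\exists\lambda'\in\Lambda\,\bigl(\|\xi'\|\ge R \,\wedge\, \|\xi'+\lambda'-y\|<R'\bigr) \]
in $\mathcal{F}_{full}$ for every real $R,R'>0$; by $\mathbb{F}_{full}\prec\mathcal{F}_{full}$ the sentence holds in $\mathbb{F}_{full}$, so $y\in\overline{(X-B(0,R))+\Lambda}$ for all $R$, i.e.\ $y\in\pi^{-1}\Fl(X)$. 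Conversely, if $y\in\pi^{-1}\Fl(X)$ then the same displayed sentence is witnessed by standard $(\xi,\lambda)$ for every $R,R'>0$, so the partial type $\{\xi\in X,\,\lambda\in\Lambda\}\cup\{\|\xi\|\ge R\}_{R>0}\cup\{\|\xi+\lambda-y\|<R'\}_{R'>0}$ is finitely satisfiable in $\mathcal{F}_{full}$; by $\kappa$-saturation a realization $(\xi,\lambda)$ exists, and it satisfies $\xi\in X^\sharp\setminus\mathcal{O}_{\mathcal{F}}^n=X_{\infty}^{\mathbb{F}}(\mathcal{F})$, $\lambda\in\Lambda^\sharp$, and $y=\st(\xi+\lambda)$.

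There is no serious obstacle; this is a clean application of saturation and elementarity. The only care required is bookkeeping about which structure each object is definable in, and verifying at the outset that passing from the ``native'' languages $\mathcal{R}_{om}$ or $\mathcal{C}_{val}$ to $\mathcal{L}_{full}$ preserves the realization sets $X^\sharp$ and $X_{\infty}^{\mathbb{F}}(\mathcal{F})$ — which holds here because the relevant predicates ($X$, $\mathcal{O}_{\mathcal{F}}^n$, the norm, and $\Lambda$) all have explicit $\mathcal{L}_{full}$-interpretations agreeing with their native ones.
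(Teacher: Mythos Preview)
Your proposal is correct and follows essentially the same approach as the paper: saturation for the forward inclusion and transfer/elementarity for the reverse. The only organizational difference is that the paper reduces the complex case to the real one by restriction of scalars and then packages the reverse inclusion via the identity $\st(S^\sharp)=\overline{S}$, whereas you work uniformly in $\mathcal{L}_{full}$ and spell out the transfer step explicitly. One small terminological caution: $\mathcal{O}_{\mathcal{F}}^n$ is only \emph{type}-definable in $\mathcal{L}_{full}$, not definable by a single formula, but since your actual argument uses the partial type $\{\|\xi\|\ge R\}_{R>0}$ this does not affect anything.
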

\begin{proof}
That $X_{\infty}^{\mathbb{F}}(\mathcal{F})=X^{\sharp}\setminus \mathcal{O}_{\mathcal{F}}^n$ is trivial when $\mathbb{F}=\mathbb{C}$, and when $\mathbb{F}=\mathbb{R}$ this follows from the fact that $\mathcal{O}_{\mathcal{R}}^n$ is by definition the set of bounded elements of $\mathcal{R}^n$.

Now, for $\mathbb{F}=\mathbb{C}$, since by treating $\mathbb{C}^n$ as a real vector space and $X$ as a semi-algebraic (and in particular $\mathbb{R}_{om}$-definable) set in $\mathbb{R}^{2n}$ we have $X_{\infty}^{\mathbb{C}}(\mathcal{C})=X_{\infty}^{\mathbb{R}}(\mathcal{R})$ by the previous paragraph, and $\st$ is unchanged under restriction of scalars, it suffices to prove the statement for $\mathcal{F}=\mathcal{R}$.

If $\pi(z)\in Fl(X)$, the type $(X_{\infty}^{\mathbb{R}}+\Lambda)\cup (z+\mu_{\mathcal{R}}^n)$ is finitely satisfied.
Thus, by saturation of $\mathcal{R}$, there exists $z'\in (X_{\infty}^{\mathbb{R}}+\Lambda)(\mathcal{R})=X_{\infty}^{\mathbb{R}}(\mathcal{R})+\Lambda^{\sharp}$ infinitesimally close to $z$. Since $\st$ is insensitive to addition by an element of infinitesimals, we have $z\in \st(X_{\infty}^{\mathbb{R}}(\mathcal{R})+\Lambda^{\sharp})$. Conversely, if $\alpha\in (X_{\infty}^{\mathbb{R}}(\mathcal{R})+\Lambda^{\sharp})\cap \mathcal{O}_{\mathcal{F}}^n$, then since $X_{\infty}^{\mathbb{R}}(\mathcal{R})\subset (X-B(0,R))^{\sharp}$, we have $\st(\alpha)\in \st((X-B(0,R))^{\sharp}+\Lambda^{\sharp})=\overline{(X-B(0,R))+\Lambda}$ for all $R\in \mathbb{R}_{>0}$.
\end{proof}

\section{Asymptotic flats}
Let
$\GrAff^{\mathbb{F}}_{i}(\mathbb{F}^n)$ be the Grassmannian of affine $i$-dimensional flats of $\mathbb{F}^n$, and let $$\GrAff^{\mathbb{F}}(\mathbb{F}^n)=\GrAff^{\mathbb{F}}_{0}(\mathbb{F}^n)\sqcup \ldots \sqcup \GrAff^{\mathbb{F}}_{n}(\mathbb{F}^n).$$
Note that
$\GrAff^{\mathbb{F}}_{0}(\mathbb{F}^n)=\mathbb{F}^n$ and $\GrAff^{\mathbb{F}}_{n}(\mathbb{F}^n)=\{\{\mathbb{F}^n\}\}$.

For $\alpha\in \mathcal{F}^n$, we denote by $A_{\alpha}^{\mathbb{F}}$ the minimal $\mathbb{F}$-flat under inclusion such that $\alpha\in (A_\alpha^\mathbb{F})^\#+\mu_{\mathcal{F}}^n$ (that this exists is \cite[Proposition 4.2]{PSTori} when $\mathbb{F}=\mathbb{C}$ and as noted in \cite[Section 7.1]{PSTori}, the proof also works for $\mathbb{F}=\mathbb{R}$).
For $0 \le i \le n$ define
$\mathcal{A}_{i}^{\mathbb{F}}(X):=\{A_{\alpha}^{\mathbb{F}}:\alpha\in X^{\sharp}\text{ and } \dim_{\mathbb{F}}(A_{\alpha}^{\mathbb{F}})=i\}\subset {\GrAff}_{i}^{\mathbb{F}}(\mathbb{F}^n)$
and
$$\mathcal{A}^{\mathbb{F}}(X):=\{A_{\alpha}^{\mathbb{F}}:\alpha\in X^{\sharp}\}=\bigsqcup_{i=0}^n A_{\alpha,i}^{\mathbb{F}}(X)\subset \GrAff^{\mathbb{F}}_n.$$
By \cite[Theorems 4.5 and 7.1]{PSTori} this is a definable subset of $\GrAff^\mathbb{F}(\mathbb F^n)$, and by definition  $A_{\alpha}^{\mathbb{F}}\in \mathcal{A}_0^{\mathbb{F}}(X)$ if and only if $\alpha\in \mathcal{O}^n$, in which case $A_{\alpha}^{\mathbb{F}}=\st(\alpha)$ (see \cite[Lemma 4.8]{PSTori}). In particular $\mathcal{A}_{0}^{\mathbb{F}}(X)=\overline{X}$. Write $\mathcal{A}_{pos}^{\mathbb{F}}(X):=\bigsqcup_{i=1}^n \mathcal{A}^{\mathbb{F}}_{i}(X)$.

A slight refinement of the arguments in \cite{PSTori} shows that if $\Lambda$ is cocompact in $\mathbb{F}^n$, then $$\Fl(X)=\bigcup_{A\in \mathcal{A}_{pos}^{\mathbb{F}}(X)}\overline{\pi(A)}.$$
We will adapt this now to lattices $\Lambda$ with $\mathbb{R}\Lambda=L$ an $\mathbb{F}$-linear subspace of $\mathbb{F}^n$.

Let $\GrAff_i^{\mathbb{F}}(\mathbb{F}^n;L)\subset \GrAff_i^{\mathbb{F}}(\mathbb{F}^n)$ be the subset of affine $i$-dimensional flats $A$ of $\mathbb{F}^n$ such that the linear part of $A$ is contained in $L$, and $$\GrAff^{\mathbb{F}}(\mathbb{F}^n;L)=\GrAff_0^{\mathbb{F}}(\mathbb{F}^n;L)\sqcup \ldots \sqcup \GrAff^{\mathbb{F}}_{\dim_{\mathbb{F}}(L)}(\mathbb{F}^n;L)$$

For $0\le i \le \dim_{\mathbb{F}}(L)$, we define
$\mathcal{A}^{\mathbb{F}}_i(X;L):=\mathcal{A}^{\mathbb{F}}_i(X)\cap \GrAff_i^{\mathbb{F}}(\mathbb{F}^n;L)$
and
$$\mathcal{A}^{\mathbb{F}}(X;L):=\mathcal{A}^{\mathbb{F}}(X)\cap \GrAff^{\mathbb{F}}(\mathbb{F}^n;L).$$
Clearly this is a definable subset, and $\mathcal{A}^{\mathbb{F}}(X;L)=X$. Write $\mathcal{A}_{pos}^{\mathbb{F}}(X;L):=\mathcal{A}^{\mathbb{F}}_{pos}(X)\cap \GrAff^{\mathbb{F}}(\mathbb{F}^n;L)$.

\begin{thm}
\label{thm:Fl}
If $\Lambda\subset \mathbb{F}^n$ is a discrete subgroup and $L\subset \mathbb{F}^n$ is an $\mathbb{F}$-linear subspace such that $\mathbb{R}\Lambda=L$, then
$$\Fl(X)=\bigcup_{A\in \mathcal{A}^{\mathbb{F}}_{pos}(X;L)}\overline{\pi(A)}.$$
\end{thm}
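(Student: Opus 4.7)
The plan is to establish both inclusions using the identification $\pi^{-1}\Fl(X)=\st(X_{\infty}^{\mathbb{F}}(\mathcal{F})+\Lambda^{\sharp})$ from the preceding theorem together with an adaptation of the cocompact-case argument of \cite{PSTori}. For $\subseteq$: given $\pi(z)\in\Fl(X)$, write $z=\st(\alpha+\lambda)$ with $\alpha\in X^{\sharp}\setminus\mathcal{O}_{\mathcal{F}}^n$ and $\lambda\in\Lambda^{\sharp}$, and set $A:=A_{\alpha}^{\mathbb{F}}$. Then $A$ is positive-dimensional since $\alpha$ is unbounded. To see that the linear part of $A$ lies in $L$, choose an $\mathbb{F}$-complement $L'\subset\mathbb{F}^n$ to $L$ with projection $p_{L'}$: since $\lambda\in L^{\sharp}$, we have $p_{L'}(\alpha)=p_{L'}(\alpha+\lambda)\in\mathcal{O}_{\mathcal{F}}^n$, with standard part $\beta\in L'$. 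Thus $\alpha\in(\beta+L)^{\sharp}+\mu_{\mathcal{F}}^n$, and by minimality of $A_\alpha$ we conclude $A\subseteq\beta+L$, so the linear part of $A$ lies in $L$. Finally $\alpha+\lambda\in(A+\Lambda)^{\sharp}+\mu_{\mathcal{F}}^n$ gives $z\in\overline{A+\Lambda}$ and $\pi(z)\in\overline{\pi(A)}$.

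For $\supseteq$: let $A\in\mathcal{A}_{pos}^{\mathbb{F}}(X;L)$ with linear part $V\subseteq L$. Since $\Lambda$ is a lattice in $L$, $L/\Lambda$ is compact, and $\mathbb{T}:=\overline{\pi(V)}$ is a compact subtorus, so $\overline{\pi(A)}=\pi(A)+\mathbb{T}$. Pick $\alpha\in X^{\sharp}$ with $A_\alpha=A$ and a basepoint $a_0\in A$, writing $\alpha=a_0+v+\epsilon$ with $v\in V^{\sharp}$ unbounded and $\epsilon\in\mu_{\mathcal{F}}^n$. Compactness of $L/\Lambda$, via transfer, gives $\lambda\in\Lambda^{\sharp}$ with $v+\lambda$ in a bounded $\mathbb{F}$-fundamental domain, so $\st(\alpha+\lambda)=a_0+\st(v+\lambda)\in a_0+\overline{V+\Lambda}$ places one point of $\overline{\pi(A)}$ into $\Fl(X)$. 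To cover all of $\overline{\pi(A)}$, I would adapt the covering argument from \cite{PSTori}: for each $z\in\overline{V+\Lambda}$, produce via $\kappa$-saturation some $\alpha^{*}\in X^{\sharp}$ with $A_{\alpha^{*}}=A$ together with $\lambda^{*}\in\Lambda^{\sharp}$ satisfying $\alpha^{*}+\lambda^{*}\in a_0+z+\mu_{\mathcal{F}}^n$, by realizing a partial type that combines $x\in X$, the flat-type conditions forcing $A_x=A$, and a $\Lambda$-modulo approximation to the target $a_0+z$.

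The main obstacle is consistency of the partial type in this covering step: this is the technical heart of \cite{PSTori}'s cocompact argument and is what requires adaptation. The restriction to $\mathcal{A}_{pos}^{\mathbb{F}}(X;L)$ is precisely what allows the compactness of $L/\Lambda$ to take over the role played in \cite{PSTori} by cocompactness of $\Lambda$ in $\mathbb{F}^n$, while the definability of $\mathcal{A}_{pos}^{\mathbb{F}}(X;L)$ as a subset of the Grassmannian supplies the necessary richness of realizations; \Cref{exmp: unequal spans} shows the conclusion would fail without the $L$-restriction.
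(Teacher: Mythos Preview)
Your $\subseteq$ direction is correct and essentially identical to the paper's argument: writing $z=\st(\alpha+\lambda)$, the boundedness of the $L'$-projection forces $A_\alpha^{\mathbb F}$ to lie in a translate of $L$ (this is the paper's \Cref{lem:Aposdescription}), and then $\alpha+\lambda\in (A_\alpha^{\mathbb F}+\Lambda)^\sharp+\mu_{\mathcal F}^n$ gives $z\in\overline{A_\alpha^{\mathbb F}+\Lambda}$.

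Your $\supseteq$ direction has a real gap. First, the partial type you describe is not the right one: ``flat-type conditions forcing $A_x=A$'' involves uncountably many \emph{negative} conditions ($x\notin (A')^\sharp+\mu_{\mathcal F}^n$ for every proper subflat $A'\subsetneq A$), and there is no reason for these to be jointly consistent with an arbitrary $\Lambda$-target. What \cite{PSTori} actually uses is the \emph{complete} type $p_\alpha$ of a fixed witness $\alpha$, and the nontrivial content of their Propositions~5.1 and~7.3 is precisely the equality $\overline{A_\alpha^{\mathbb F}+\Lambda}=\st(p_\alpha(\mathcal F)+\Lambda^\sharp)$ in the cocompact case. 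You correctly flag that this is the hard step, but ``adapt the covering argument'' is not a proof: the linear part of $A$ lies in $L$ but $A$ itself need not, and $X$ certainly does not, so one cannot simply run the \cite{PSTori} argument inside $L$.

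The paper avoids reopening this black box entirely via an auxiliary-lattice trick: choose a lattice $\Lambda'$ in an $\mathbb F$-complement to $L$ so that $\Lambda+\Lambda'$ is cocompact in $\mathbb F^n$, apply \cite{PSTori} verbatim to get $\overline{A_\alpha^{\mathbb F}+\Lambda+\Lambda'}=\st(p_\alpha(\mathcal F)+\Lambda^\sharp+(\Lambda')^\sharp)$, observe that on both sides the only contributing elements of $(\Lambda')^\sharp$ are bounded (hence infinitesimally close to $\Lambda'$), and then intersect both sides with the single coset $\mathbb F^k\times\{\st(\alpha_2)\}$ to strip off the $\Lambda'$-translates. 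This is both shorter and cleaner than adapting the internal argument of \cite{PSTori}, and it is the missing idea in your proposal.
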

Before proving the theorem, we start by proving some lemmas.

\begin{lem}
\label{lem:Aposdescription}
Suppose that $L=\mathbb{F}^k\times \{0\}^{n-k}$. Then for $\alpha\in \mathcal{F}^n$, we have $A_{\alpha}^{\mathbb{F}}\in \GrAff_i(\mathbb{F}^n;L)$ if and only if $\alpha=(\alpha_1,\alpha_2)\in \mathcal{F}^k\times \mathcal{O}_{\mathcal{F}}^{n-k}$, in which case $A_{\alpha}^{\mathbb{F}}\subset \mathbb{F}^k\times \{\st(\alpha_2)\}$. In particular,
$$\mathcal{A}_{pos}^{\mathbb{F}}(X;L)=\{A_{\alpha}^{\mathbb{F}}:\alpha\in X_{\infty}^{\mathbb{F}}(\mathcal{F})\cap (\mathcal{F}^k\times \mathcal{O}_{\mathcal{F}}^{n-k})\}.$$
\end{lem}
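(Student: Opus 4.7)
The plan is to unpack the minimality definition of $A_\alpha^\mathbb{F}$ directly by exhibiting an explicit test flat and comparing linear parts. First I would handle the ``if'' direction: given $\alpha=(\alpha_1,\alpha_2)\in\mathcal{F}^k\times\mathcal{O}_\mathcal{F}^{n-k}$, consider the candidate $B:=\mathbb{F}^k\times\{\st(\alpha_2)\}\in\GrAff_k^\mathbb{F}(\mathbb{F}^n;L)$, whose realization is $B^\#=\mathcal{F}^k\times\{\st(\alpha_2)\}$. The decomposition $\alpha=(\alpha_1,\st(\alpha_2))+(0,\alpha_2-\st(\alpha_2))$ witnesses $\alpha\in B^\#+\mu_\mathcal{F}^n$, so by minimality $A_\alpha^\mathbb{F}\subseteq B$. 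Since inclusion of flats implies inclusion of linear parts, the linear part of $A_\alpha^\mathbb{F}$ lies in $L$, giving both $A_\alpha^\mathbb{F}\in\GrAff(\mathbb{F}^n;L)$ and the containment $A_\alpha^\mathbb{F}\subset\mathbb{F}^k\times\{\st(\alpha_2)\}$.

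For the ``only if'' direction, I would assume $A_\alpha^\mathbb{F}\in\GrAff_i(\mathbb{F}^n;L)$ and write $A_\alpha^\mathbb{F}=v+V$ with $V\subseteq L=\mathbb{F}^k\times\{0\}^{n-k}$ and $v=(v_1,v_2)\in\mathbb{F}^k\times\mathbb{F}^{n-k}$. Every point of $A_\alpha^\mathbb{F}$ then has last $n-k$ coordinates equal to $v_2$, so $(A_\alpha^\mathbb{F})^\#\subseteq\mathcal{F}^k\times\{v_2\}$. Combined with the defining inclusion $\alpha\in(A_\alpha^\mathbb{F})^\#+\mu_\mathcal{F}^n$, this forces $\alpha_2-v_2\in\mu_\mathcal{F}^{n-k}$, whence $\alpha_2\in\mathcal{O}_\mathcal{F}^{n-k}$ and $\st(\alpha_2)=v_2$, as required.

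For the ``in particular'' clause, I would observe that $A_\alpha^\mathbb{F}\in\mathcal{A}_{pos}^\mathbb{F}(X;L)$ is the conjunction of $\alpha\in X^\#$, the linear part of $A_\alpha^\mathbb{F}$ lying in $L$, and $\dim_\mathbb{F}A_\alpha^\mathbb{F}\geq 1$. The second condition translates, by the equivalence just proved, into $\alpha\in\mathcal{F}^k\times\mathcal{O}_\mathcal{F}^{n-k}$, while the third, by the already-cited \cite[Lemma 4.8]{PSTori}, is equivalent to $\alpha\notin\mathcal{O}_\mathcal{F}^n$; combining these with $\alpha\in X^\#$ and the identification $X^\#\setminus\mathcal{O}_\mathcal{F}^n=X_\infty^\mathbb{F}(\mathcal{F})$ from the previous theorem yields the asserted description. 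There is no genuine obstacle here: the only step that requires real thought is identifying the correct test flat $\mathbb{F}^k\times\{\st(\alpha_2)\}$, after which the geometric content that ``linear part contained in $L$'' encodes ``last $n-k$ coordinates bounded and converging to $\st(\alpha_2)$'' is immediate.
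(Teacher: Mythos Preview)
Your argument is correct and follows essentially the same route as the paper: both directions use the test flat $\mathbb{F}^k\times\{\st(\alpha_2)\}$ and the minimality of $A_\alpha^{\mathbb{F}}$ in exactly the way you describe, and your treatment of the ``in particular'' clause simply spells out what the paper leaves implicit.
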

\begin{proof}
If $A_{\alpha}^{\mathbb{F}}\in \GrAff_i(\mathbb{F}^n;L)$, then $A_{\alpha}^{\mathbb{F}}\subset \mathbb{F}^k\times \{a\}$ for some $a\in \mathbb{F}^{n-k}$, so $$\alpha\in (A_{\alpha}^{\mathbb{F}})^{\sharp}+\mu_\mathcal{F}^n\subset (\mathbb{F}^k\times \{a\})^{\sharp}+\mu_{\mathcal{F}}^n= \mathcal{F}^k\times \{a\}+\mu_\mathcal{F}^n\subset \mathcal{F}^k\times \mathcal{O}_{\mathcal{F}}^{n-k}.$$
Conversely, if $\alpha\in \mathcal{F}^k\times \mathcal{O}_{\mathcal{F}}^{n-k}$, then  $$\alpha\in \mathcal{F}^k\times \{\st(\alpha_2)\}+\mu_{\mathcal{F}}^n= (\mathbb{F}^k\times \{\st(\alpha_2)\})^{\sharp}+\mu_{\mathcal{F}}^n,$$ so $A_{\alpha}^{\mathbb{F}}\subset \mathbb{F}^k\times \{\st(\alpha_2)\}=L+(0,\st(\alpha_2))$ and this shows $A_{\alpha}^{\mathbb{F}}\in \GrAff_i(\mathbb{F}^n;L)$.
\end{proof}
\begin{defn}
For $\alpha\in \mathcal{F}^n$, we define $p_{\alpha}$ to be the complete $\mathcal{C}_{val}$-type of $\alpha$ with $\mathbb{C}$-constants if $\mathbb{F}=\mathbb{C}$ and to be the complete $\mathcal{R}_{om}$-type of $\alpha$ with $\mathbb{R}$-constants if $\mathbb{F}=\mathbb{R}$. 
\end{defn}
\begin{lem}
\label{lem:palphaproperty}
For $\alpha=(\alpha_1,\alpha_2)\in \mathcal{F}^k\times \mathcal{O}_{\mathcal{F}}^{n-k}$ and $a=\st(\alpha_2)$, then $p_{\alpha}(\mathcal{F})\subset \mathcal{F}^k\times \{a\}+\mu_{\mathcal{F}}^{n}$. Also, if $\alpha\in X_{\infty}(\mathcal{F})$ then $p_{\alpha}(\mathcal{F})\subset X_{\infty}(\mathcal{F})$, so in particular
$$\alpha\in X_{\infty}^{\mathbb{F}}(\mathcal{F})\cap (\mathcal{F}^k\times \mathcal{O}_{\mathcal{F}}^{n-k}) \implies p_{\alpha}(\mathcal{F})\subset X_{\infty}^{\mathbb{F}}(\mathcal{F})\cap (\mathcal{F}^k\times \mathcal{O}_{\mathcal{F}}^{n-k}).$$
\end{lem}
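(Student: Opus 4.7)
The plan is to unpack the statement in terms of the complete type $p_\alpha$ and to check that each containment is witnessed by formulas in the appropriate language with constants in $\mathbb{F}$. Recall that $p_\alpha$ is the complete $\mathcal{C}_{val}$-type (respectively $\mathcal{R}_{om}$-type) of $\alpha$ with constants in $\mathbb{F}$, so any $\beta\in p_\alpha(\mathcal{F})$ satisfies exactly the same $\mathcal{L}_{val}$-formulas (resp.\ $\mathcal{L}_{om}$-formulas) with $\mathbb{F}$-constants as $\alpha$ does.

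For the first claim, the hypothesis $\alpha_2\in\mathcal{O}_{\mathcal{F}}^{n-k}$ with $\st(\alpha_2)=a$ is exactly the assertion $\alpha_{2,j}-a_j\in\mu_{\mathcal{F}}$ for every coordinate $j$. When $\mathbb{F}=\mathbb{C}$, the set $\mu_{\mathcal{C}}$ is $0$-definable in $\mathcal{L}_{val}$, so each ``$x_{2,j}-a_j\in\mu_{\mathcal{C}}$'' is a single $\mathcal{L}_{val}$-formula with constants in $\mathbb{C}$, which therefore belongs to $p_\alpha$ and holds of $\beta$. When $\mathbb{F}=\mathbb{R}$, instead use that $\mu_{\mathcal{R}}$ is captured by the partial type $\{-r<y<r\}_{r\in\mathbb{R}_{>0}}$ of $\mathcal{L}_{om}$-formulas with $\mathbb{R}$-constants: each $-r<\alpha_{2,j}-a_j<r$ holds of $\alpha$ and hence lies in $p_\alpha$, so holds of $\beta$. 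In either case $\beta_2-a\in\mu_{\mathcal{F}}^{n-k}$, i.e.\ $\beta\in\mathcal{F}^k\times\{a\}+\mu_{\mathcal{F}}^n$.

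For the second claim, apply the same strategy to the partial type $X_{\infty}^{\mathbb{F}}$. Since $X$ is constructible (resp.\ $\mathbb{R}_{om}$-definable) over $\mathbb{F}$, and ``$x\notin\mathcal{O}_{\mathcal{C}}^n$'' (resp.\ ``$\|x\|\ge R$'' for each standard $R\in\mathbb{R}_{>0}$) is expressible by an $\mathcal{L}_{val}$-formula (resp.\ $\mathcal{L}_{om}$-formula) with $\mathbb{F}$-constants, every formula in $X_{\infty}^{\mathbb{F}}$ is of the appropriate kind. If $\alpha\in X_{\infty}^{\mathbb{F}}(\mathcal{F})$ then each such formula lies in $p_\alpha$ and hence holds of every realization $\beta$, so $\beta\in X_{\infty}^{\mathbb{F}}(\mathcal{F})$. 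The ``in particular'' then follows by combining the two assertions, using $\mathcal{F}^k\times\{a\}+\mu_{\mathcal{F}}^n\subset\mathcal{F}^k\times\mathcal{O}_{\mathcal{F}}^{n-k}$.

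The only real point to watch is the language dependence: $\mu_{\mathcal{R}}$ and $\mathcal{O}_{\mathcal{R}}$ are not individually definable in $\mathcal{L}_{om}$, so in the real case one genuinely needs to treat ``being infinitesimal'' as a partial type of first-order formulas rather than a single predicate. This is the reason the lemma is phrased using the complete type $p_\alpha$ in the appropriate language and the main (modest) obstacle in the proof.
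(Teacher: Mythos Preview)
Your argument is correct and is essentially the same as the paper's: both observe that $\mathcal{F}^k\times\{a\}+\mu_{\mathcal{F}}^n$ and $X_\infty^{\mathbb{F}}(\mathcal{F})$ are realization sets of partial types in the relevant language ($\mathcal{L}_{val}$ or $\mathcal{L}_{om}$) with $\mathbb{F}$-constants, so that any formula in them lies in the complete type $p_\alpha$ and hence holds of every realization. Your write-up simply makes explicit the coordinate-by-coordinate formulas and the distinction between the single predicate for $\mu_{\mathcal{C}}$ in $\mathcal{L}_{val}$ versus the partial type for $\mu_{\mathcal{R}}$ in $\mathcal{L}_{om}$, which the paper compresses into a one-line remark.
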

\begin{proof}
$\mathcal{F}^k\times \{a\}+\mu_{\mathcal{F}}^n=(\mathbb{F}^k\times \{a\})^{\sharp}+\mu_{\mathcal{F}}^n$ corresponds to a partial type that $\alpha$ satisfies by construction. Similarly for  $X_{\infty}^{\mathbb{F}}$ (noting that this partial type is definable over $\mathcal{C}_{val}$ with $\mathbb{C}$-coefficients when $\mathbb{F}=\mathbb{C}$ and over $\mathbb{R}_{om}$ with $\mathbb{R}$-coefficients when $\mathbb{F}=\mathbb{R}$).
\end{proof}
\begin{lem}
For $A_{\alpha}^{\mathbb{F}}\in \GrAff(\mathbb{F}^n;L)$ we have $\overline{A_{\alpha}^{\mathbb{F}}+\Lambda}=\st(p_{\alpha}(\mathcal{F})+\Lambda^{\sharp})$.
\end{lem}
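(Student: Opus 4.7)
I would prove the two inclusions separately.

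For the $\supseteq$-inclusion $\st(p_\alpha(\mathcal{F}) + \Lambda^\sharp) \subseteq \overline{A_\alpha^\mathbb{F} + \Lambda}$, note that the partial type ``$\beta \in A_\alpha^\mathbb{F} + B(0,r)$'' for $r \in \mathbb{R}_{>0}$ lies in $p_\alpha$, since $\alpha$ satisfies it by the defining property of $A_\alpha^\mathbb{F}$. Hence $p_\alpha(\mathcal{F}) \subseteq (A_\alpha^\mathbb{F})^\sharp + \mu_\mathcal{F}^n$, so $p_\alpha(\mathcal{F}) + \Lambda^\sharp \subseteq (A_\alpha^\mathbb{F})^\sharp + \Lambda^\sharp + \mu_\mathcal{F}^n$. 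Applying $\st$ (which is insensitive to $\mu_\mathcal{F}^n$) together with the identities $(A_\alpha^\mathbb{F})^\sharp + \Lambda^\sharp = (A_\alpha^\mathbb{F} + \Lambda)^\sharp$ and $\st(S^\sharp) = \overline{S}$ established in Section 2 gives the claim.

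For the $\subseteq$-inclusion, I would fix $y \in \overline{A_\alpha^\mathbb{F} + \Lambda}$ and seek $\beta \in p_\alpha(\mathcal{F})$ and $\mu \in \Lambda^\sharp$ with $\beta + \mu \in y + \mu_\mathcal{F}^n$ via $\kappa$-saturation of $\mathcal{R}_{full}$ (or $\mathcal{C}_{full}$) applied to the partial $\mathcal{L}_{full}$-type
\[ q(\beta) = p_\alpha(\beta) \cup \{\beta \in y + \Lambda + B(0,r)\}_{r \in \mathbb{R}_{>0}}, \]
where $\Lambda$ appears as a unary predicate of $\mathcal{L}_{full}$. A realization $\beta^*$ of $q$ produces the required $\mu^* \in \Lambda^\sharp$ with $\beta^* + \mu^* \in y + \mu_\mathcal{F}^n$, and then $y = \st(\beta^* + \mu^*) \in \st(p_\alpha(\mathcal{F}) + \Lambda^\sharp)$. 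After changing coordinates via \Cref{lem:Aposdescription} so that $L = \mathbb{F}^k \times \{0\}^{n-k}$, any realization of $p_\alpha$ has $L^\perp$-component infinitesimally close to $\xi_0 = \st(\alpha_2)$ by \Cref{lem:palphaproperty} (matching the $L^\perp$-component of $y$), so the nontrivial constraint is only in the $L$-direction, where $\mathbb{R}\Lambda = L$ makes the projection $\Lambda_1 \subseteq \mathbb{F}^k$ a full-rank lattice.

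The main obstacle is verifying finite satisfiability of $q$: given $\phi \in p_\alpha$ and $r > 0$, I need to produce $\beta \in \phi(\mathcal{F})$ with $\beta \in y + \Lambda^\sharp + B(0,r)$. A naive translate $\alpha \mapsto \alpha + w$ with $w \in V^\sharp$ (the linear part of $A_\alpha^\mathbb{F}$) generally does not preserve $p_\alpha$, and shifting $\alpha$ only by a standard $\lambda \in \Lambda$ restricts us to a single $\Lambda^\sharp$-coset. My plan would be to approximate $y$ standardly by some $a + \lambda$ with $a \in A_\alpha^\mathbb{F}$ and $\lambda \in \Lambda$ both standard and $|a + \lambda - y| < r/2$, and then to invoke $\kappa$-saturation of $\mathcal{R}_{full}$/$\mathcal{C}_{full}$ to produce a realization of $p_\alpha$ whose $L$-projection lies within $r/2$ of $a$ in the quotient by $\Lambda^\sharp$. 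This step exploits the crucial fact that membership in a prescribed residue class modulo $\Lambda^\sharp$ is $\mathcal{L}_{full}$-definable but not $\mathcal{L}_{om}$/$\mathcal{L}_{val}$-definable, so it is consistent with any $\mathcal{L}_{om}$/$\mathcal{L}_{val}$-formula satisfied by $\alpha$, and it is the step where the hypothesis $\mathbb{R}\Lambda = L$ genuinely enters.
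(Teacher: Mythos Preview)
Your $\supseteq$-inclusion is correct and routine. The gap is in the $\subseteq$-inclusion, at the finite satisfiability step. The assertion that ``membership in a prescribed residue class modulo $\Lambda^\sharp$ is $\mathcal{L}_{full}$-definable but not $\mathcal{L}_{om}$/$\mathcal{L}_{val}$-definable, so it is consistent with any $\mathcal{L}_{om}$/$\mathcal{L}_{val}$-formula satisfied by $\alpha$'' is a non sequitur: non-definability of a predicate in a sublanguage does not make it consistent with every formula of that sublanguage. Concretely, for a single $\phi \in p_\alpha$ you must produce an element of $\phi(\mathcal{F})$ lying in $a + \Lambda^\sharp + B(0,r/2)^\sharp$, and nothing you have written forces this. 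Take for instance $X = \{(x,y) : y^2 = x,\ y > 0\} \subset \mathbb{R}^2$, $\Lambda = \mathbb{Z}^2$, and $\alpha = (t,\sqrt{t})$ with $t$ positive unbounded; here $A_\alpha^{\mathbb{R}} = \mathbb{R}^2$, the formula $\phi(x,y) \equiv (y^2 = x \wedge y > 0)$ lies in $p_\alpha$, and asking for a solution of $\phi$ in a prescribed $\Lambda$-residue ball of small radius is an equidistribution statement about $(s \bmod 1,\ \sqrt{s} \bmod 1)$, not a definability statement. In general the finite satisfiability you need is exactly the content of \cite[Proposition~5.1]{PSTori} (complex case) and \cite[Proposition~7.3]{PSTori} (real case), which are the substantive inputs and are not reproved by your outline. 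There is also a secondary technical issue in the complex case: $p_\alpha$ is a $\mathcal{C}_{val}$-type and, as noted in Section~2, $\mathcal{C}_{val}$ is not a reduct of $\mathcal{C}_{full}$, so your type $q$ is not an $\mathcal{L}_{full}$-type and $\kappa$-saturation of $\mathcal{C}_{full}$ does not apply to it as stated.

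The paper avoids both problems by reducing to the cocompact case rather than attacking finite satisfiability directly. It chooses an auxiliary lattice $\Lambda' \subset \{0\}^k \times \mathbb{F}^{n-k}$ with $\mathbb{R}\Lambda' = \{0\}^k \times \mathbb{F}^{n-k}$, so that $\Lambda + \Lambda'$ has full real span, applies \cite[Propositions~5.1 and~7.3]{PSTori} to obtain $\overline{A_\alpha^{\mathbb{F}} + \Lambda + \Lambda'} = \st(p_\alpha(\mathcal{F}) + \Lambda^\sharp + (\Lambda')^\sharp)$, observes that in any bounded element of $p_\alpha(\mathcal{F}) + \Lambda^\sharp + (\Lambda')^\sharp$ the $(\Lambda')^\sharp$-summand must itself be bounded (hence infinitesimally close to a standard point of $\Lambda'$), and then intersects both sides with the slice $\mathbb{F}^k \times \{\st(\alpha_2)\}$ to strip off the $\Lambda'$-translates. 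This cleanly isolates the hard step as the already-established cocompact result.
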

\begin{proof}
After a linear change of coordinates, we may assume that $\Lambda\subset L=\mathbb{F}^k\times \{0\}$. Then by \Cref{lem:Aposdescription} we have $\alpha=(\alpha_1,\alpha_2)\in \mathcal{F}^k\times \mathcal{O}_{\mathcal{F}}^{n-k}$, and writing $a=\st(\alpha_2)$ we have $A_{\alpha}^{\mathbb{F}}\subset \mathbb{F}^k\times \{a\}$, so the left hand side is contained in $\mathbb{F}^k\times \{a\}$. Additionally, $\Lambda^{\sharp}\subset L^{\sharp}=\mathcal{F}^k\times \{0\}^{n-k}$ and by \Cref{lem:palphaproperty} we also have  $p_{\alpha}(\mathcal{F})\subset  \mathcal{F}^k\times \{a\}+\mu_{\mathcal{F}}^{n}$, so the right hand side is also contained in $\mathbb{F}^k\times \{a\}$.

Let $\Lambda'\subset \{0\}^k\times \mathbb{F}^{n-k}$ be a discrete subgroup with $\mathbb{R}\Lambda'=\{0\}^k\times \mathbb{F}^{n-k}$. Then $\mathbb{R}(\Lambda+\Lambda')=\mathbb{F}^n$ so by \cite[Proposition 5.1]{PSTori} in the complex case and \cite[Proposition 7.3]{PSTori} in the real case, we have
$$\overline{A_{\alpha}^{\mathbb{F}}+\Lambda}+\Lambda'=\overline{A_{\alpha}^{\mathbb{F}}+\Lambda+\Lambda'}=\st(p_{\alpha}(\mathcal{F})+(\Lambda+\Lambda')^{\sharp})=\st(p_{\alpha}(\mathcal{F})+\Lambda^{\sharp}+(\Lambda')^{\sharp}).$$
Because $p_{\alpha}(\mathcal{F}),\Lambda^{\sharp}\subset \mathcal{F}^k\times \mathcal{O}_{\mathcal{F}}^{n-k}$, for the last $n-k$ coordinates of the sum $p_{\alpha}(\mathcal{F})+\Lambda^{\sharp}+(\Lambda')^{\sharp}$ to be bounded we need the element from $(\Lambda')^{\sharp}$ to lie in $\{0\}\times\mathcal{O}_{\mathcal{F}}^{n-k}$, which in particular means it is infinitesimally close to an element of $\Lambda'$. Therefore,
$$\overline{A_{\alpha}^{\mathbb{F}}+\Lambda}+\Lambda'=\st(p_{\alpha}(\mathcal{F})+\Lambda^{\sharp})+\Lambda'.$$
Intersecting both sides of the equality with $\mathbb{F}^k\times \{a\}$, we obtain the desired equality.
\end{proof}
\begin{proof}[Proof of \Cref{thm:Fl}]
We may suppose that $\Lambda\subset L=\mathbb{F}^k\times \{0\}^{n-k}$. Then since $\Lambda^{\sharp}\subset \mathcal{F}^k\times \{0\}^{n-k}$, if a sum $\alpha+\lambda\in \mathcal{O}_{\mathcal{F}}^n$ for some $\lambda\in \Lambda^{\sharp}$ then we must have $\alpha\in \mathcal{F}^k\times \mathcal{O}_{\mathcal{F}}^{n-k}$. Therefore, 
$$\pi^{-1}(\Fl(X))=\bigcup_{\alpha\in X_{\infty}^{\mathbb{F}}(\mathcal{F})}\st(\alpha+\Lambda^{\sharp})=\bigcup_{\alpha\in X_{\infty}^{\mathbb{F}}(\mathcal{F})\cap(\mathcal{F}^k\times \mathcal{O}_{\mathcal{F}}^{n-k})}\st(\alpha+\Lambda^{\sharp})$$
$$=\bigcup_{\alpha\in X_{\infty}^{\mathbb{F}}(\mathcal{F})\cap (\mathcal{F}^k\times \mathcal{O}_{\mathcal{F}}^{n-k})}\st(p_{\alpha}(\mathcal{F})+\Lambda^{\sharp})=\bigcup_{A_{\alpha}^{\mathbb{F}}\in \mathcal{A}^{\mathbb{F}}_{pos}(X;L)}\overline{A_{\alpha}^{\mathbb{F}}+\Lambda}$$
where the third equality is by \Cref{lem:palphaproperty}.
\end{proof}

\begin{rem}
    In the cocompact case, every positive-dimensional algebraic set has a nontrivial asymptotic flat by  \cite[Lemma 4.8]{PSTori}. In contrast, $\mathcal{A}^{\mathbb{F}}_{pos}(X;L)$ may be empty. For example, the parabola $y = x^2$ in $\bbR^2$ has no nontrivial asymptotic flats relative to the lattice $\Lambda = \bbZ \times 0$, because any unbounded solution is unbounded in both coordinates. 
\end{rem}

\section{Neat families}
In this section we take ``definable'' to mean defianble either over $\bbR_{om}$ in the language of ordered fields, or over $\bbC$ in the language of fields, according to whether the setting is real or complex. By Chevalley's theorem, in the latter case the definable sets are exactly constructible sets, i.e., Boolean combinations of algebraic sets.

For a flat $A\in \GrAff^{\mathbb{F}}(\mathbb{F}^n)$, we write $L(A)\subset \mathbb{F}^n$ for the linear part of $A$ (the $\mathbb{F}$-subspace such that we can write $A=L(A)+c$ for some $c$), and for a subset $T\subset \GrAff^{\mathbb{F}}(\mathbb{F}^n)$, we define $L(T)\subset \mathbb{F}^n$ to be the smallest $\mathbb{F}$-linear space containing $L(A)$ for all $A\in T$.
\begin{defn}
A ``neat'' family of flats is a definable family $T\subset \GrAff_i^{\mathbb{F}}(\mathbb{F}^n)$ such that
\begin{enumerate}
    \item $T$ is a connected $\mathbb{R}$-submanifold.
    \item For any nonempty open subset $U\subset T$ we have $L(U)=L(T)$.
\end{enumerate}
\end{defn}
\begin{thm}[{\cite[Theorem 7.7 and Section 6.1]{PSTori}}]
Every definable family $T\subset \GrAff^{\mathbb{F}}(\mathbb{F}^n)$ can be written as a finite union of neat definable families.
\end{thm}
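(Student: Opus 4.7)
The plan is to apply o-minimal cell decomposition combined with refinement driven by a local span function. First, view $\GrAff^{\bbC}(\bbC^n)$ as a real semi-algebraic manifold via restriction of scalars, so that definable subsets (constructible sets in the complex case) become semi-algebraic and o-minimal tools apply uniformly to both cases. Then apply o-minimal cell decomposition to write $T$ as a finite disjoint union of definable cells, each a connected definable $\bbR$-submanifold, handling condition (1). So assume $T$ is itself such a cell.

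Next, introduce the intrinsic local span $L_{\mathrm{loc}}^{T}(A) := \bigcap_U L(U)$ where $U$ ranges over definable open neighborhoods of $A$ in $T$. Because the Grassmannian of $\bbF$-subspaces of $\bbF^n$ has bounded dimension and $L(U)$ decreases monotonically as $U$ shrinks, the intersection is attained on some sufficiently small $U$. Using a definable distance function on $\GrAff^{\bbF}(\bbF^n)$ together with definable choice, $L_{\mathrm{loc}}^T$ is then a definable function on $T$.

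If $L_{\mathrm{loc}}^T$ is constant equal to some subspace $V$ on all of $T$, then $T$ is neat: for any nonempty open $W \subseteq T$ and $A \in W$, some small $U \subseteq W$ satisfies $L(U) = V$, giving $V \subseteq L(W)$; conversely $L(W) \subseteq L(T)$, and $L(T) = V$ because $L(A) \subseteq L_{\mathrm{loc}}^T(A) = V$ for all $A \in T$ while $V = L(U) \subseteq L(T)$. Otherwise, cell-decompose $T$ along the graph of $L_{\mathrm{loc}}^T$ into cells on which it is constant, and iterate the procedure on each sub-cell. The iteration terminates because passing to a sub-cell $T'$ can only decrease the intrinsic local span, i.e., $L_{\mathrm{loc}}^{T'}(A) \subseteq L_{\mathrm{loc}}^T(A)$, since every open neighborhood of $A$ in $T'$ is of the form $V \cap T'$ for some open neighborhood $V$ of $A$ in $T$, and $L(V \cap T') \subseteq L(V)$; the dimensions of these subspaces are nonnegative integers bounded by $n$, so the refinement must stabilize after finitely many rounds.

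The main obstacle is rigorously establishing definability of $L_{\mathrm{loc}}$, which requires exhibiting a definable distance function on $\GrAff^{\bbF}(\bbF^n)$ together with definable choice to select ``sufficiently small'' neighborhoods at which $L(U)$ has stabilized. A secondary point is carefully tracking the strict decrease of intrinsic local span dimension across iterations to ensure termination, since the cells themselves proliferate even as the local span shrinks. In the complex case, all of this must be carried out at the level of the real reduct of $\bbC^n$ in order to invoke the o-minimal machinery uniformly with the real case.
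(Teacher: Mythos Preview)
Your approach differs substantially from the paper's. In the real case the paper simply cites \cite[Theorem~7.7]{PSTori}, and in the complex case it gives a short algebro-geometric argument: decompose $T$ into smooth irreducible pieces, and observe that any nonempty open $U$ in an irreducible $T$ is Zariski dense, so the Zariski-closed condition $\{A:L(A)\subset L(U)\}$, which contains $U$, must be all of $T$. Your route---uniformly treating both cases via o-minimal cell decomposition and an iterated refinement along a local-span function---is more hands-on and avoids invoking irreducibility, but it loses something in the complex case, and the termination step is not yet correct.

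\medskip

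\textbf{Definability mismatch in the complex case.} The paper's convention in this section is that ``definable'' over $\mathbb{C}$ means constructible, and a neat family is by definition a \emph{definable} family. When you restrict scalars and run o-minimal cell decomposition, the resulting pieces are semi-algebraic but need not be constructible over $\mathbb{C}$. Hence you have not produced neat families in the required sense. This matters downstream: the $C_i$ in Theorem~\ref{thm:complex} are obtained as Zariski closures of images of the $T_i$, and for them to be algebraic varieties (rather than merely semi-algebraic) one needs the $T_i$ to be constructible. The paper's irreducible-component decomposition guarantees this; your cell decomposition does not.

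\medskip

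\textbf{Termination.} Your stated reason---that $\dim L_{\mathrm{loc}}^{T'}(A)\le \dim L_{\mathrm{loc}}^{T}(A)$ is a bounded nonnegative integer---does not force the procedure to halt, because the inequality need not be strict. Indeed, if $T'$ is an \emph{open} cell in the decomposition of $T$, then small neighborhoods of $A$ in $T'$ and in $T$ coincide, so $L_{\mathrm{loc}}^{T'}(A)=L_{\mathrm{loc}}^{T}(A)$ exactly, and your integer does not drop. The correct termination argument runs via the \emph{cell} dimension: on any open subcell $T'\subset T$ one has $L_{\mathrm{loc}}^{T'}=L_{\mathrm{loc}}^{T}|_{T'}$, which is already constant by construction, so only strictly lower-dimensional cells require further refinement; induction on $\dim T$ then finishes. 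You flagged this as a ``secondary point,'' but the argument you actually wrote does not establish it.
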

\begin{proof}
In the real setting, this is exactly what is proved in \cite[Theorem 7.7]{PSTori}. In the complex setting, we may decompose $T$ into finitely many smooth pieces, and then further decompose each of those pieces into its finitely many irreducible components, so it suffices to show that if $T$ is smooth and irreducible then it is neat. A smooth irreducible set is a connected $\mathbb{R}$-submanifold, so it suffices to show that for any nonempty open subset $U\subset T$ that $L(U)=L(T)$. Any nonempty open subset $U\subset T$ is Zariski dense in $T$ by irreducibility of $T$, so because the Zariski-closed set of those $A \in T$ such that $L(A) \subset L(U)$ contains $U$, it must therefore be all of $T$. Hence $L(A) \subset L(U)$ for all $A \in T$, so $L(T) \subset L(U)$, and $L(U) = L(T)$.
\end{proof}
By this theorem, we may break up the definable family of flats as $\mathcal{A}_{pos}^{\mathbb{F}}(X;L)=T_1\cup \ldots \cup T_m$ where each $T_i$ is a neat family in $\GrAff_j(\mathbb{F}^n;L)$ with $1\le j \le \dim(L)$. We need the following proposition, the analogue of \cite[Proposition 6.2 and Proposition 7.6]{PSTori} for $\GrAff_j(\mathbb{F}^n;L)$.
\begin{prop}
\label{prop:dense}
If $T$ is a neat family of $j$-dimensional flats of $\mathbb{F}^n$ in $\GrAff_j(\mathbb{F}^n;L)$, then $\bigcup_{A\in T}\pi(A)$ is topologically dense in $\bigcup_{A\in T}\pi(A+L(T))$.
\end{prop}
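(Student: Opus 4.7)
The plan is to approximate each point $\pi(y_0)\in\bigcup_{A\in T}\pi(A+L(T))$ by $\pi(a)$ with $a\in A$ for some $A\in T$ close to a base flat $A_0$, using two ingredients: (i) the transverse offset $c_A$ varies continuously on $T$, and (ii) for a dense set of $A\in T$, the image $\pi(L(A))$ is dense in the closed sub-torus $L_T:=\overline{\pi(L(T))}\subset L/\Lambda$. For (i), I fix an $\mathbb{F}$-linear complement $L^\perp$ of $L$ in $\mathbb{F}^n$ with projection $q:\mathbb{F}^n\to L^\perp$. Since $L(A)\subset L=\ker q$, the point $c_A:=q(A)$ is well-defined, satisfies $A\subset L+c_A$, and depends definably and continuously on $A\in T$. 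Given $y_0=a_0+v_0\in A_0+L(T)$, writing $a_0=l_0+c_{A_0}$ with $l_0\in L(A_0)\subset L(T)$ yields $\pi(y_0)=\pi(l_0+v_0)+\pi(c_{A_0})$ with $\pi(l_0+v_0)\in\pi(L(T))\subset L_T$.

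For (ii), I run a Baire-category argument over the Pontryagin dual $\widehat{L_T}$, which is countable since $L_T$ is a compact real torus. Any nonzero $\chi\in\widehat{L_T}$ extends to a character of $L/\Lambda$, hence lifts to an $\mathbb{R}$-linear functional $\tilde\chi:L\to\mathbb{R}$ with $\tilde\chi(\Lambda)\subset\mathbb{Z}$. I define the ``bad'' set
\[B_\chi:=\{A\in T:L(A)\subset\ker\tilde\chi\},\]
a closed definable subset of $T$ cut out by a sub-Grassmannian condition on $L(A)$. If $B_\chi$ had nonempty interior $U\subset T$, the neat hypothesis would give $L(T)=L(U)\subset\ker\tilde\chi$, so $\chi|_{L_T}=0$, contradicting $\chi\ne 0$. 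Hence each $B_\chi$ is nowhere dense in $T$, which is Baire as a connected $\mathbb{R}$-submanifold of the ambient Grassmannian. The countable union $\bigcup_{0\ne\chi}B_\chi$ is therefore meager, its complement $T_{\mathrm{good}}$ is dense in $T$, and by Pontryagin duality $\overline{\pi(L(A))}=L_T$ for every $A\in T_{\mathrm{good}}$.

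Given $\epsilon>0$, density of $T_{\mathrm{good}}$ lets me pick $A\in T_{\mathrm{good}}$ so close to $A_0$ that $|\pi(c_A)-\pi(c_{A_0})|<\epsilon/2$, and density of $\pi(L(A))$ in $L_T\ni\pi(l_0+v_0)$ lets me choose $l\in L(A)$ with $|\pi(l)-\pi(l_0+v_0)|<\epsilon/2$. Then $a:=l+c_A\in A$, and
\[\pi(a)-\pi(y_0)=[\pi(l)-\pi(l_0+v_0)]+[\pi(c_A)-\pi(c_{A_0})]\]
has norm less than $\epsilon$, giving $\pi(y_0)\in\overline{\bigcup_{A\in T}\pi(A)}$. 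The main difficulty is step (ii): the neat hypothesis is precisely what forces each $B_\chi$ to be proper in $T$, and hence, since $B_\chi$ is closed, nowhere dense; without neatness, the whole family could lie in some $B_\chi$ and density would fail.
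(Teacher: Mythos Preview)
Your overall strategy is the same as the paper's: run a Baire category argument to show that the set of $A\in T$ with $\overline{\pi(L(A))}=L_T$ is dense, using neatness to show each ``bad'' locus has empty interior, and then approximate. Indexing the bad sets by characters $\chi\in\widehat{L_T}$ rather than by proper closed Lie subgroups of $L/\Lambda$ is a harmless reformulation.

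There is, however, a concrete error in your approximation step. You define $c_A=q(A)\in L^\perp$ and then write $a_0=l_0+c_{A_0}$ with $l_0\in L(A_0)$, and later $a:=l+c_A\in A$ for $l\in L(A)$. Both claims require $c_A\in A$, which is false whenever $L(A)\subsetneq L$ (the generic situation, since $j\le\dim_{\mathbb F}L$). For instance, with $n=3$, $L=\mathbb F^2\times\{0\}$, $L^\perp=\{0\}^2\times\mathbb F$, and $A_0=\{(t,1,1):t\in\mathbb F\}$, one has $c_{A_0}=(0,0,1)\notin A_0$; taking $a_0=(0,1,1)$ gives $l_0=(0,1,0)\notin L(A_0)$. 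Consequently you cannot conclude $\pi(l_0+v_0)\in L_T$, and your final point $a=l+c_A$ does not lie in $A$.

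The fix is to drop the $c_A$ bookkeeping entirely. Write $y_0=a_0+v_0$ with $a_0\in A_0$ and $v_0\in L(T)$, so $\pi(v_0)\in L_T$. For $A\in T_{\mathrm{good}}$ close to $A_0$ in $\GrAff_j^{\mathbb F}(\mathbb F^n)$, choose $a'\in A$ close to $a_0$ (this is what convergence in the affine Grassmannian gives), and then pick $l\in L(A)$ with $\pi(l)$ close to $\pi(v_0)$. Then $a:=a'+l\in A$ and $\pi(a)=\pi(a')+\pi(l)$ is close to $\pi(a_0)+\pi(v_0)=\pi(y_0)$. This is essentially how the paper organizes the second half of the argument.
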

\begin{proof}
The proof of \cite[Proposition 7.6]{PSTori} carries over with essentially no modifications.

First we show the analogue of \cite[Proposition 7.6(1)]{PSTori}, that the set $\{A\in T: \overline{\pi(L(A))}=\overline{\pi(L(T))}\}$ is topologically dense in $T$. Indeed, there are only countably many closed real Lie subgroups of $L/\Gamma$, so by the Baire Category theorem it suffices to show for any proper real Lie subgroup $\mathbb{T}\subsetneq \overline{\pi(L(T))}$ that the set $\{A\in T:\pi(L(A))\subset \mathbb{T}\}$ is nowhere dense in $T$. This set is equal to $\{A\in T: L(A)\subset W\}$, where $W$ is the maximal $\mathbb{F}$-linear subspace contained in $\pi^{-1}(\mathbb{T})$ (either the connected component $W'$ of $0$ in the real setting or $W'\cap iW'$ in the complex setting). Since this is definable, it suffices to show that it does not contain an open subset $U$ of $T$. But since $T$ is neat, $L(U)=L(T)$, so not every  $L(A)$ for $A\in U$ can be contained in $W$ since $W$ is a proper subspace of $L(T)$.

Now we show $\bigcup_{A\in T}\pi(A)$ is topologically dense in $\bigcup_{A\in T}\pi(A+L(T))$, the analogue of \cite[Proposition 7.6(2)]{PSTori}. Indeed, by what we have just shown, there is a sequence $A_1,A_2,\ldots \to A$ such that $\overline{\pi(L(A_i))}=\overline{\pi(L(T))}$, or equivalently $\overline{\pi(A_i)}=\overline{\pi(A_i+L(T))}$. Hence $\bigcup_i \overline{\pi(A_i)}=\bigcup_i \overline{\pi(A_i+L(T))}$ contains $\overline{\pi(A+L(T))}$ in its closure, and therefore $\bigcup_i \pi(A)$ contains $\overline{\pi(A+L(T))}$ in its closure.
\end{proof}
Now the proofs of \Cref{thm:complex} and \Cref{thm:real} proceed identical to \cite[Theorem 7.8]{PSTori}. We sketch how this works. For each $i$, let $L(T_i)^{\perp}$ be a complementary $\mathbb{F}$-subspace to $L(T_i)$. For $A\in T_i$ the intersection $(A+L(T_i))\cap L(T_i)^{\perp}$ is a single point $c_{A,i}$, and we let $C_i'=\bigcup_{A\in T_i}c_{i,A}\subset L(T_i)^{\perp}$ be the closure of the definable set of such points as we range over $A\in T$. Letting $C_i=\overline{C_i'}$, we have $\dim_{\mathbb{F}}C_i=\dim_{\mathbb{F}}C_i'$, and \cite[Section 6.2 and 7.3.1, Proofs of Clause (i)]{PSTori} applied to $C_i'$ shows that with this construction, $\dim_{\mathbb{F}}C_i=\dim_{\mathbb{F}}C_i'<\dim_{\mathbb{F}} X$.  Let $V_i=L(T_i)$ and $\mathbb{T}_i=\overline{\pi(V_i)}$. Then we have (by \Cref{thm:Fl} for the first equality, by \Cref{prop:dense} and the fact that $\Fl(X)$ is closed for the second equality, that $A+L(T_i)=c_{A,i}+V_i$ for the third equality, and that $\Fl(X)$ is closed for the fourth equality) that
$$\Fl(X)=\bigcup_i\bigcup_{A\in T_i}\overline{\pi(A)}=\bigcup_i\bigcup_{A\in T_i} \overline{\pi(A+L(T_i))}=\bigcup_i\bigcup_{A\in T_i}\pi(c_{A,i})+\overline{\pi(V_i)}=\bigcup_i \pi(C_i)+ \mathbb{T}_i.$$

\bibliographystyle{plain}
\bibliography{biblio}

\begin{thebibliography}{1}

\bibitem{BK}
Martin Bays and Jonathan Kirby.
\newblock Pseudo-exponential maps, variants, and quasiminimality.
\newblock {\em Algebra Number Theory}, 12(3):493--549, 2018.

\bibitem{DinhVu}
Tien-Cuong Dinh and Duc-Viet Vu.
\newblock Algebraic flows on commutative complex {L}ie groups.
\newblock {\em Comment. Math. Helv.}, 95(3):421--460, 2020.

\bibitem{Gallinaro}
Francesco~Paolo Gallinaro.
\newblock Around exponential-algebraic closedness.
\newblock April 2022.

\bibitem{PSTori}
Ya'acov Peterzil and Sergei Starchenko.
\newblock Algebraic and o-minimal flows on complex and real tori.
\newblock {\em Adv. Math.}, 333:539--569, 2018.

\bibitem{Ullmo}
Emmanuel Ullmo and Andrei Yafaev.
\newblock Algebraic flows on abelian varieties.
\newblock {\em J. Reine Angew. Math.}, 741:47--66, 2018.

\bibitem{ZilberQuasiminimal}
B.~I. Zilber.
\newblock Generalized analytic sets.
\newblock {\em Algebra i Logika}, 36(4):387--406, 478, 1997.

\end{thebibliography}
\end{document}